\newtheorem{theorem}{Theorem}
\newtheorem{corollary}[theorem]{Corollary}
\begin{document}

\title{A note on monotonicity of mixed Ramsey numbers}
\author{Maria Axenovich} 
\author{JiHyeok  Choi}
 
\thanks{Department of Mathematics, Iowa State University, Ames, IA 50011}
\thanks{The project was supported in part by   NSA grant H98230-09-1-0063 and NSF grant DMS-0901008}
\begin{abstract}   
For two graphs, $G$, and $H$, an edge-coloring of a complete graph is $(G,H)$-good 
if there is no monochromatic subgraph isomorphic to $G$ and no rainbow subgraph isomorphic to 
$H$ in this coloring. The set of number of colors used by some $(G,H)$-colorings of 
$K_n$ is called a mixed-Ramsey spectrum. This note addresses a fundamental question of 
whether the spectrum is an interval. It is shown that the answer is ``yes''
if $G$ is not a star and $H$ does not contain a pendent edge.
\end{abstract}

\maketitle

\section{Introduction}

Let $G$ and $H$ be two graphs on fixed number of
vertices. An edge coloring of a complete graph, $K_n$,  is called
$(G,H)$-good if there is no monochromatic copy of $G$ and no
rainbow (totally multicolored) copy of $H$ in this coloring. This, sometimes called mixed-Ramsey 
coloring, is a hybrid of classical Ramsey and anti-Ramsey colorings, \cite{R, ESS}.
As shown by Jamison and West \cite{JW}, a $(G,H)$-good coloring of an arbitrarily large complete graph exists unless either $G$ is a star or  $H$ is a forest.

Let $S(n; G, H)$ be the set of the number of colors, $k$, such that there is 
a $(G,H)$-good coloring of $K_n$ with $k$ colors. We call $S(n; G, H)$ a spectrum.
Let $\max S(n; G, H)$, $\min S(n; G, H)$  be the maximum, minimum  number in $S(n; G, H)$, respectively.
The behavior of these functions was studied in \cite{AI}, \cite{FM}, \cite{AC} and others.
Note that if there is no restriction on a graph $H$, $S(n; G, *)$ is an interval $[k, \binom{n}{2}]$, 
 where $k$ is the largest number such that 
 $r_{k-1}(G)\leq n$, a classical multicolor Ramsey number.

The main question investigated in this note is whether the same behavior continues to 
hold for mixed Ramsey colorings. Specifically,  for given integer $n$ and graphs $G$ and $H$, 
is $S(n; G, H)$ an interval?
When $G$ is not a star, for most graphs $H$, we show that $S(n; G, H)$ is an interval.

\begin{theorem}\label{nopendent} Let $G$ be  a graph that is not a star, and let $H$ be a graph with minimum degree at least $2$. 
Then for any natural number  $n$, $S(n; G,H)$ is an interval.
\end{theorem}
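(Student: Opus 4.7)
The plan is to show that from any $(G,H)$-good coloring of $K_n$ using $k\ge 2$ colors one can produce a $(G,H)$-good coloring using exactly $k-1$ colors; iterating this downward through the spectrum yields the interval property.

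Let $\chi$ be a $(G,H)$-good $k$-coloring of $K_n$, and choose a color class $F_c$ of minimum size. I would first treat the case $|F_c|=1$, say $F_c=\{e\}$, and attempt to recolor $e$ with some other color $c'\ne c$. The essential observation is that this recoloring cannot introduce a new rainbow copy of $H$: any copy of $H$ through $e$ that failed to be rainbow under $\chi$ did so because a pair of its edges shared a color, and since $c$ appeared on no edge other than $e$, that duplicated color was not $c$, so the coincidence persists after $e$ is recolored. Consequently, the only way $e\mapsto c'$ can fail is by producing a monochromatic copy of $G$, which happens exactly when $F_{c'}$ already contains a copy of $G-e$ in which the endpoints of $e$ in $G$ are mapped to the endpoints of $e$ in $K_n$. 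If any $c'$ avoids this, the recoloring yields the desired $(G,H)$-good $(k-1)$-coloring.

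Suppose instead that every one of the $k-1$ remaining color classes contains such an anchored copy of $G-e$, yielding $k-1$ edge-disjoint monochromatic copies of $G-e$, in $k-1$ distinct colors, all rooted at the common endpoints of $e$. I would then derive a contradiction with the $(G,H)$-goodness of $\chi$ by assembling a rainbow copy of $H$ from $e$ together with edges chosen, one per color, from these anchored copies. The non-star hypothesis on $G$ is used to guarantee that $G-e$ provides adequate local structure around the endpoints of $e$---either edges incident to them, when some endpoint has degree at least two in $G$, or edges disjoint from them, guaranteed by the non-star property---so that edges of many different colors are simultaneously available where $H$ needs them. The hypothesis $\delta(H)\ge 2$ is then used to match $H$ against this structure, since every vertex of $H$ demands at least two incident edges and the anchored family supplies enough differently colored edges at the critical vertices to meet that demand.

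Finally, when the smallest color class has size greater than one, a preparatory step of single-edge color shifts between classes---each preserving $(G,H)$-goodness by the same rainbow argument together with a local check against creating monochromatic $G$---would first reduce the minimum class to size one. The main obstacle I anticipate is the contradiction step: explicitly embedding a rainbow $H$ inside the union of the $k-1$ anchored copies of $G-e$. This will require a case analysis driven by the local structure of $G-e$ at the endpoints of $e$ and by the degree profile of $H$, invoking the non-star property of $G$ to secure versatile structure and the minimum-degree-$2$ property of $H$ to keep the embedding free of pendant obstructions.
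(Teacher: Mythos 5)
Your overall plan---turning any $(G,H)$-good $k$-coloring of $K_n$ with $k\ge 2$ colors into a good $(k-1)$-coloring and iterating downward---proves too much and therefore cannot work. If such a reduction were always possible, then $1\in S(n;G,H)$ whenever the spectrum is nonempty; but a $1$-coloring of $K_n$ with $n\ge |V(G)|$ contains a monochromatic $G$, and for instance $\min S(n;K_3,K_3)=\Theta(\log n)$ (cf.\ Theorem~\ref{others}). So the reduction must fail exactly at $k=\min S(n;G,H)$, and nothing in your argument distinguishes that value of $k$: the interval property cannot be obtained by color-reduction at fixed $n$ alone. The failure is visible in your contradiction step. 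Take $G=H=K_3$ and let $e=xy$ be the unique edge of its color $c$: an anchored copy of $G-e$ in color $c'$ is a monochromatic path $xzy$, and every triangle it forms with $e$ has color pattern $(c',c',c)$, which is not rainbow; having such an anchored path in every other color class forces no rainbow triangle whatsoever, so no contradiction with $(G,H)$-goodness follows. Your preparatory step is also unjustified: the rainbow-preservation argument used crucially that $e$ was the unique edge in its color, so shifting an edge out of a class of size at least two can create a new rainbow $H$ through that edge, and in examples such as the two-pentagon $2$-coloring of $K_5$ (which is $(K_3,K_3)$-good) one can never reach a singleton color class while staying good. Finally, the roles of ``$G$ is not a star'' and $\delta(H)\ge 2$ are only gestured at, never actually used in a verifiable way.

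The paper's proof runs along a genuinely different axis: induction on $n$ rather than descent in $k$ at fixed $n$. One takes the least $k$ with $[k,\max S(n;G,H)]\subseteq S(n;G,H)$; by Observation~1 (merging two star color classes with a common center, which is safe because $G$ is not a star and merging colors never creates a rainbow subgraph) every vertex has at most one private color in such a $k$-coloring, so deleting a vertex yields a good coloring of $K_{n-1}$ with $k$ or $k-1$ colors. By induction $S(n-1;G,H)$ is an interval, and Observation~4 (adding a new vertex whose edges all get one brand-new color, safe because $G$ is not a star and $\delta(H)\ge 2$ forces any copy of $H$ through the new vertex to repeat that color) shifts this interval up by one inside $S(n;G,H)$; combining with $\min S(n;G,H)\ge\min S(n-1;G,H)$ closes the gap between $\min S(n;G,H)$ and $k$. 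Note that the two hypotheses enter precisely in these two local operations; if you want to salvage your approach, you would need an argument that works only for $k>\min S(n;G,H)$, which is exactly the information a fixed-$n$ recoloring argument does not have access to.
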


The simplest connected graph $H$ which is not a tree and which has  a vertex of degree $1$  is $K_3+e$, 
a $4$-vertex graph obtained by attaching a pendent edge to a triangle.
We show that $S(n; G, K_3+e)$  could have a gap for some graphs $G$ and 
some values of $n$. However,  when $n$ is arbitrarily large, 
we do not have a single example of a graph $G$ and a graph $H$ for 
which $S(n; G, H)$ is not an interval.

Specifically, the next theorem is a collection of results on $S(n; G,K_3+e)$.
Here, $\ell K_2$ is a matching of size $\ell$, $C_4$ is a $4$-cycle, and $P_4$ is a path on $4$ 
vertices.

\begin{theorem}\label{others}

\noindent
\begin{itemize}
\item{}
$S(n; \ell K_2, K_3)= S(n; \ell K_2, K_3+e) = [\lceil\frac {n-2\ell+1}{\ell-1}\rceil+1, n-1]$,  $n\geq 4 $,\\
$S(n; P_4, K_3)= S(n; P_4, K_3+e) = [n-2, n-1]$, $n\geq 4$,\\
$S(n; C_4, K_3)= S(n;  C_4, K_3+e) = [n-3, n-1]$,  $n\geq r_3(C_4)=11$,\\
$S(n; K_3, K_3) = S(n; K_3, K_3+e) = [ c \log n, n-1]$, $n\geq  r_3(K_3)=17$,\\
$S(n; K_{1, \ell}, K_3)= S(n; K_{1, \ell}, K_3+e)= \emptyset$, $n \geq  3\ell+1$.\\
\item{}
$S(10;C_4,K_3+e)=\{3,7,8,9\}$.\\
\end{itemize}
\end{theorem}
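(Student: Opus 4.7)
The plan is to handle the five families in the first bullet one at a time, unified by a single observation: every rainbow $K_3+e$ contains a rainbow $K_3$ on its triangle vertices, so every $(G,K_3)$-good coloring is automatically $(G,K_3+e)$-good. This yields $S(n;G,K_3)\subseteq S(n;G,K_3+e)$, and each claimed equality reduces to matching the extremes of the two spectra; the intermediate values are then realized once and for all by $(G,K_3)$-good constructions.

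For the minima, I would exploit the structural restriction that forbidding a monochromatic $G$ places on each color class. For $G=\ell K_2$ every class has matching number at most $\ell-1$, and an Erd\H{o}s--Gallai-type edge bound, refined by the rainbow restriction, yields at least $\lceil(n-2\ell+1)/(\ell-1)\rceil+1$ colors. For $G=P_4$ every connected component of every class is a star or a triangle (the only connected $P_4$-subgraph-free graphs), which combined with a Gallai-partition argument forces at least $n-2$ colors. For $G=C_4$ the Reiman $O(n^{3/2})$ bound on $C_4$-free graphs combines with the rainbow-triangle restriction to yield $n-3$, and for $G=K_3$ the multicolor Ramsey inequality $r_k(K_3)\le n$ supplies $k\ge c\log n$. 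For the star $G=K_{1,\ell}$, the max-degree constraint $\le\ell-1$ per color forces every vertex to see at least four distinct colors once $n\ge 3\ell+1$; a short neighborhood argument then locates a rainbow $K_3$, giving $S(n;K_{1,\ell},K_3)=\emptyset$, and a parallel but slightly more delicate argument produces a rainbow $K_3+e$ for the second equality.

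The upper bound $n-1$ in the first four cases is realized by the standard Gallai recursion: fix a vertex $v$, color all edges incident to $v$ with one fresh color, and recurse on $K_n-v$. The resulting coloring has no rainbow triangle, and one verifies case by case that it avoids the relevant monochromatic $G$. Every intermediate value of $k$ in the stated interval is obtained by merging some of these color classes in a controlled way; the routine but delicate step is checking that each such merger preserves both monochromatic-$G$-freeness and rainbow-$K_3$-freeness.

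The final claim $S(10;C_4,K_3+e)=\{3,7,8,9\}$ is proven by explicit construction of $(C_4,K_3+e)$-good colorings of $K_{10}$ realizing $3,7,8,9$ colors, together with ruling out $k\in\{1,2,4,5,6\}$. The constructions for $7,8,9$ are variants of the Gallai recursion above, while the three-color example comes from a symmetric decomposition of $K_{10}$ into three $C_4$-free subgraphs carefully arranged so that no rainbow $K_3+e$ appears. The main obstacle is excluding $k\in\{4,5,6\}$: one must show that any partition of $E(K_{10})$ into $k$ $C_4$-free classes necessarily produces a rainbow $K_3+e$, which I expect to require a finite case analysis based on possible color-class degree sequences together with the Gallai-partition structure of any putative rainbow-$K_3$-free coloring. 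This exclusion is precisely what illustrates why the minimum-degree hypothesis on $H$ in Theorem~\ref{nopendent} is essential.
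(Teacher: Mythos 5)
Your skeleton agrees with the paper's (use $S(n;G,K_3)\subseteq S(n;G,K_3+e)$, match the two spectra at their extremes, invoke interval structure, treat $n=10$ separately), but the quantitative core is missing or does not work as described. For the minima, per-class edge counting is far too weak: Erd\H{o}s--Gallai for matchings gives only about $n/(2(\ell-1))$ colors and Reiman's $C_4$ bound only on the order of $\sqrt n$ colors, nowhere near $\lceil\frac{n-2\ell+1}{\ell-1}\rceil+1$ and $n-3$; the ``refinement by the rainbow restriction'' is the entire nontrivial content, and you cannot even appeal to Gallai-partition structure there, because the hypothesis is only rainbow-$(K_3+e)$-freeness, not rainbow-triangle-freeness. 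What is actually needed (and what the paper uses) is $f(k,G,K_3)=f(k,G,K_3+e)$ with the known exact values $\lambda(k)$, $k+2$, $k+3$ from the cited literature, plus, for $\ell K_2$, the Ramsey-extremal coloring of $K_{r_k(\ell K_2)-1}$ built so that it has no rainbow cycle; your proposal supplies no constructions attaining the claimed minima at all, and without them even $\min S(n;G,K_3)$ is not pinned down. The $K_{1,\ell}$ case has the same problem in sharper form: ``every vertex sees at least four colors'' does not yield a rainbow triangle by a short neighborhood argument (there are colorings with no rainbow triangle in which every vertex sees many colors); one needs the Gy\'arf\'as--Simonyi theorem that rainbow-triangle-free colorings contain a monochromatic $K_{1,2n/5}$, and its $K_{1,n/3}$ consequence for rainbow-$(K_3+e)$-free colorings via pigeonhole. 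Finally, $\max S(n;G,K_3+e)\le n-1$ requires $AR(n,K_3+e)=n-1$ (Gorgol); you only construct colorings attaining $n-1$ and never bound the maximum from above.

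Two further steps fail as stated. First, interpolation by merging color classes does not preserve monochromatic-$G$-freeness: in your own Gallai recursion, merging the star classes centered at $v_i$ and $v_{i+1}$ creates a monochromatic $C_4$ (and a monochromatic $P_4$), so the ``routine but delicate'' merging step is in fact broken for exactly the graphs at hand; the correct tool is Theorem~\ref{nopendent} applied with $H=K_3$ (minimum degree $2$), which gives that $S(n;G,K_3)$ is an interval and which you never invoke for this purpose. Second, for $S(10;C_4,K_3+e)$ the three-color example needs no care about rainbow copies (a $3$-coloring cannot contain a rainbow $K_3+e$, which has four edges); only $r_3(C_4)=11$ is relevant there. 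The decisive point is excluding $4$, $5$, and $6$ colors, and your proposal leaves this as an ``expected'' finite case analysis, while also conflating rainbow-$K_3$-free with rainbow-$(K_3+e)$-free colorings when invoking Gallai partitions. The paper obtains the exclusion immediately from $f(k,C_4,K_3+e)=k+3<10$ for $4\le k\le 6$; without that result, cited or proved, your argument for the second bullet is incomplete at its crucial step.
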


\begin{corollary}
If $\ell\geq 2$ and $n \geq \max \{17, 3\ell+1\}$, then $S(n; G, K_3+e)$ is an interval for any $G\in \{K_3, \ell K_2, C_4, P_4, K_{1, \ell}\}$. However,  $S(n; G, K_3+e)$ is not an interval if $n=10$ and $G=C_4$. 
\end{corollary}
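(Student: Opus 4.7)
The plan is to derive this corollary as an immediate bookkeeping consequence of Theorem~\ref{others}, which already computes $S(n;G,K_3+e)$ on the nose for each $G$ in the list. First I would check that the hypothesis $n\ge\max\{17,3\ell+1\}$ with $\ell\ge 2$ is strong enough to trigger every relevant clause of Theorem~\ref{others}: the bound $n\ge 17$ covers both $r_3(K_3)\le n$ (needed for the $K_3$ case) and $r_3(C_4)=11\le n$ (needed for the $C_4$ case), and it clearly dominates the weaker constraints $n\ge 4$ that appear for $\ell K_2$ and $P_4$; separately, $n\ge 3\ell+1$ is precisely what the $K_{1,\ell}$ row requires.

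Next I would go down the list and read off the spectra. For $G\in\{K_3,\ell K_2,C_4,P_4\}$, Theorem~\ref{others} writes $S(n;G,K_3+e)=[a,n-1]$ for an explicit integer $a$ (namely $c\log n$, $\lceil(n-2\ell+1)/(\ell-1)\rceil+1$, $n-3$, and $n-2$ respectively, each well-defined since $\ell\ge 2$ makes the denominator nonzero), so each is an interval by definition. For $G=K_{1,\ell}$, the spectrum is $\emptyset$, which is vacuously an interval. This settles the first assertion.

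For the second assertion, Theorem~\ref{others} records $S(10;C_4,K_3+e)=\{3,7,8,9\}$; since $4\notin\{3,7,8,9\}$ while both $3$ and $9$ are present, the set is not an interval.

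There is essentially no obstacle in the corollary itself: the entire content lives in Theorem~\ref{others}, and the proof here is simply the observation that sets of the form $[a,n-1]$ and $\emptyset$ are intervals while $\{3,7,8,9\}$ is not. The only place a reader could stumble is matching the hypothesis $n\ge\max\{17,3\ell+1\}$ with the per-case thresholds in Theorem~\ref{others}, which is the verification carried out in the first paragraph above.
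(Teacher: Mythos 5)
Your proposal is correct and is exactly how the corollary follows in the paper: it is an immediate reading-off of the spectra computed in Theorem~\ref{others}, with each listed spectrum of the form $[a,n-1]$ or $\emptyset$ being an interval and $\{3,7,8,9\}$ not being one. Your careful matching of the hypothesis $n\ge\max\{17,3\ell+1\}$ against the per-case thresholds of Theorem~\ref{others} is the same (implicit) bookkeeping the paper relies on.
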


\noindent
{\bf Open question.}
Are there graphs $G$ and $H$ such that for any natural number $N$ there is $n>N$
so that $S(n; G, H)$ is not an interval?

~\\~\\

\section {Definitions and proofs of main results}

For an edge coloring $c$ of $K_n$ and a vertex $x\in V(K_n)$,
let $N_c(x)$ be the set of colors used only on edges incident to $x$, and for $X\subseteq V(K_n)$ let $c(X)$ be the set of colors used on edges induced by $X$.
Let $|c|$ denote the number of colors used in the coloring $c$.
Then $|c| = |N_c(x)| + |c(V\setminus x)|$ for any $x\in V$.
We shall use function 
$$f(k;G,H)=\max  \{n : \mbox{ there is a $(G, H)$-good coloring of $K_n$ using exactly $k$ colors}\}.$$
Note that if $f(k;G,H) = n$, then $\min S(n;G,H)= k$.\\~\\

\noindent
{\bf Observation $1$ } 
If  $G$ is not a star, and $A$ and $B$ are color classes which are stars with the same center 
in a $(G,H)$-good coloring $c$ of $K_n$ with $k$ colors,  then 
replacing $A$ and $B$ in $c$  with a new color class $A\cup B$ gives a $(G, H)$-good coloring
using $k-1$ colors.\\

\noindent
{\bf Observation $2$ } 
For any graphs $G$ and $H$,
$$\min S(n;G,H) \leq \min S(n+1,G,H).$$
\begin{proof}
Consider a $(G,H)$-good coloring of $K_{n+1}$ with $k$ colors. Delete one vertex to 
get a $(G,H)$-good coloring of $K_{n}$ with $ k'\leq k$ colors. 
\end{proof}

\noindent
{\bf Observation $3$ } 
For $G\subseteq G'$ and $H\subseteq H'$,
$$S(n;G,H)\subseteq S(n;G',H)\subseteq S(n;G',H')\quad\mbox{ and }\quad S(n;G,H) \subseteq S(n;G,H') \subseteq S(n;G',H').$$

\begin{proof}
If there is no monochromatic $G$ and no rainbow $H$ in a coloring of $E(K_n)$, then there is no monochromatic $G'$ and no rainbow $H'$ in this coloring.
\end{proof}


\noindent
{\bf Observation $4$ } 
If $G$ is not a star, $H$ has minimum degree at least $2$, and $k\in S(n;G,H)$, 
then $k+1\in S(n+1;G,H)$.
\begin{proof}
Consider a $(G,H)$-good coloring of $K_{n}$ with $k$ colors. Add a new vertex $x$, and color edges incident to x by a new color to get a $(G,H)$-good coloring of $K_{n+1}$ with $ k+1$ colors.
\end{proof}
~\\

\begin{proof}[Proof of Theorem \ref{nopendent}]
~\\
We need to prove that $[\min S(n;G,H), \max S(n;G,H)]\subseteq S(n;G,H)$. 
We use induction on $n$. 
When $n=2$, any coloring uses one color. 
Let $n\geq 3$. 
Consider the smallest $k$ such that $[k,\max S(n;G,H)]\subseteq S(n;G,H)$. Observe that in any $(G,H)$-good $k$-coloring of $K_n$ and any vertex $x$, we have $|N(x)|\leq 1$, otherwise applying 
Observation $1$ gives us a $(G,H)$-good $(k-1)$-coloring of $K_n$  violating minimality of $k$. 
Consider a $(G,H)$-good $k$-coloring of $K_n$ and any vertex $x$, and delete it. Then we have a $(G,H)$-good coloring of $K_{n-1}$ with $k$ or $k-1$ colors. Here we note that 
$\max S(n-1;G,H)\geq k-1$. By induction, $S(n-1;G,H)$ is an interval, i.e., $[\min S(n-1; G, H), \max S(n-1; G, H)] = S(n-1; G,H)$. Then by Observation $4$, 
$[\min S(n-1;G,H)+1, \max S(n-1;G,H)+1] \subseteq S(n;G,H)$. 
Since $\min S(n;G,H)\geq \min S(n-1;G,H)$ from Observation 2, 
$[\min S(n;G,H), \max S(n-1;G,H)+1] \subseteq S(n;G,H)$. 
Since $k\leq \max S(n-1;G,H) +1$ and $[k,\max S(n;G,H)]\subseteq S(n;G,H)$  we finally have that $[\min S(n;G,H), \max S(n;G,H)]\subseteq S(n;G,H)$.
\end{proof}
~\\

\begin{proof}[Proof of Theorem \ref{others}]
~\\
First observe that 
$\max S(n;G,H)\leq AR(n,H)$, where $AR(n, H)$ is the classical anti-Ramsey 
number, the maximum number of colors in an edge-coloring of $K_n$ with no rainbow 
subgraphs isomorphic to $H$. 
If  $G$ is not a star, $\max S(n;G,K_3)= AR(n,K_3)=n-1$, see \cite{AI}.
Moreover, from Observation 3, we obtain that 
$\max S(n;G,K_3)\leq \max S(n;G,K_3+e)$; and from \cite{Gor},  we know that 
$AR(n,K_3)=AR(n,K_3+e)$. Thus, when $G$ is not a star, $\max S(n;G,K_3)=\max S(n;G,K_3+e)=n-1$ for $n\geq 4$.

Therefore if $\min S(n; G, K_3) = \min S (n, G, K_3+e)$, and $G$ is not a star, we
can conclude that $S(n; G, K_3+e) = S(n; G, K_3)$, which is an interval by Theorem $1$.
Next, we shall analyze $\min S (n, G, K_3+e)$.
 Recall that  $\min S(n;G,H)= k$ if $f(k,G,H)= n$.
Moreover, $f(k,G,H)+1\leq r_k(G)$, where  $r_k(G)$ denotes the classical $k$-color Ramsey number for $G$. 
The equality holds if there is a $k$-coloring of $E(K_{r_k(G)-1})$ with 
no monochromatic $G$ and no rainbow $H$.\\

Case 1.  $G=\ell K_2$\\
From  \cite{Rad}, we have that $r_k(\ell K_2) = (k-1)(\ell-1)+2\ell$.
 The extremal coloring providing this Ramsey number can be constructed as follows.
Consider a complete graph on $2\ell-1$ vertices colored entirely with color 1, 
add $\ell-1$ vertices and color all edges incident to these vertices with color 2, 
then add another $\ell-1$ vertices and color all edges incident to these vertices with color 3. 
Repeat this process until we get a $k$-coloring of a complete graph on $2\ell-1 + (k-1)(\ell-1)$ vertices which contains no monochromatic $\ell K_2$. 
Note that this coloring contains no rainbow cycles, thus, it contains neither rainbow copy of $K_3$ nor rainbow copy of $K_3+e$.
Hence $\min S(n;\ell K_2, H) = \min S(n;\ell K_2,H+e)$ for any $H$, not a forest.
In particular for $\ell\geq2$, $\min S(n;\ell K_2,K_3) = \min S(n;\ell K_2,K_3+e) = \lceil\frac {n-2\ell+1}{\ell-1}\rceil+1$.

Case 2.  $G \in \{K_3, P_4, C_4\}$\\
From   \cite{CG,AI,GSSS, FGJM, FM} we have  that
$f(k,K_3,K_3)=f(k,K_3,K_3+e)= \lambda(k)$, for $k\geq 4$,
where $\lambda(k)=5^{k/2}$ if $k$ is even, $2\cdot 5^{(k-1)/2}$ if $k$ is odd; 
$f(k,P_4,K_3)=f(k,P_4,K_3+e)= k+2 \,\mbox{ for } k\geq 1$, and 
$f(k,C_4,K_3)=f(k,C_4,K_3+e)= k+3 \,\mbox{ for } k\geq 4$.
Therefore $\min S(n;P_4,K_3)=\min S(n;P_4,K_3+e)=n-2$, 
$\min S(n;C_4,K_3)=\min S(n;C_4,K_3+e)=n-3$, and $\min S(n;K_3,K_3)=\min S(n;K_3,K_3+e)=c\log n$.
Thus $\min S(n; G, K_3)= \min S(n; G, K_3+e)$ for $G\in \{K_3, P_4, C_4\}$ and $n\geq r_3(G)$.

Case 3.   $G = K_{1, \ell}$\\
In \cite{GS}, it was shown that any coloring of $E(K_n)$ with no rainbow triangles  has a monochromatic 
star $K_{1,2n/5}$.  Using this fact and the pigeonhole principle, we easily see that any coloring of $E(K_n)$ with no rainbow $K_3+e$ has a monochromatic star $K_{1,n/3}$. This is sharp as is seen in \cite{FM}.
Therefore $S(n;K_{1,\ell},K_3)=S(n; K_{1, \ell}, K_3+e) = \emptyset$ if $n>3\ell$.\\

Summarizing 1), 2),  and 3) we have that $S(n; G, K_3) = S(n; G, K_3+e)$ is an interval if 
 $G$ is one of  $\{\ell K_2$,  $K_3$,  $P_4$,  $C_4$,   $K_{1, \ell}\}$ and  $n\geq N$, where $N$ is a constant depending only on $G$.
 This concludes the proof of the first part of the Theorem.\\

Consider the case when $G=C_4$, $H=K_3+e$  and $n=10$.
Since $r_2(C_4)=6<10$, we see that there is  no $(C_4, K_3+e)$-good coloring of $K_{10}$ in two colors.
On the other hand, since $r_3(C_4)=11$, there is a $(C_4, K_3+e)$-good coloring of $K_{10}$ in three colors. Thus
$\min S(10; C_4, K_3+e)=3$. 
We also have that $ \max S(10; C_4, K_3+e)=AR(10,K_3)=9$.
Since $f(k,C_4,K_3+e)= k+3<10$ for $4\leq k\leq 6$, there is no $(C_4,K_3+e)$-good coloring of $K_{10}$ with $4$, $5$, or $6$ colors.
To construct $8$- and $7$-colorings of $K_{10}$ with no rainbow $K_3+e$ and no monochromatic $C_4$, 
consider a vertex set  $\{v_1, \ldots, v_{10}\}$. 
Let $c(v_iv_j)= i$, $1\leq i\leq 7$, $i<j$;  $c(v_8v_9)=c(v_8v_{10})=c(v_9v_{10})= 8$.
Let $c'(v_iv_j)= i$, $1\leq i\leq 5$, $i<j$;  $c'(v_6v_7)=c'(v_7v_8)=c'(v_8v_9)=c'(v_9v_{10})=c'(v_{10}v_6)=6$, all other edges get color $7$ under $c'$.
Note that $c$ and $c'$ are $8$- and $7$-colorings, respectively,  containing no rainbow $K_3$ and no monochromatic $C_4$.
Thus $S(10;C_4,K_3+e)=\{3,7,8,9\}$.

 \end{proof}

\end{document}